\journal{Currently being prepared
}
\begin{document}
\begin{frontmatter}
\title{(Currently being prepared)A new class of regularized preconditioners for double saddle-point problems}
\author[1]{A. Badahmane}
\address[1]{The UM6P Vanguard Center, Mohammed VI Polytechnic University,, Morocco, email: badahmane.achraf@gmail.com}
\newcommand{\Leg}{{\mathcal L(E,G)}}
\newcommand{\Lef}{{\mathcal L(E,F)}}
\newcommand{\Lfg}{{\mathcal L(F,G)}}
\newcommand{\Le}{{\mathcal L(E)}}
\newcommand{\I}{{\mathcal{I}}}
\newcommand{\ia}{{\mathfrak I}}
\newcommand{\vi}{\emptyset}
\newcommand{\di}{\displaystyle}
\newcommand{\Om}{\Omega}
\newcommand{\na}{\nabla}
\newcommand{\wi}{\widetilde}
\newcommand{\al}{\alpha}
\newcommand{\be}{\beta}
\newcommand{\ga}{\gamma}
\newcommand{\Ga}{\Gamma}
\newcommand{\e}{\epsilon}
\newcommand{\la}{\lambda}
\newcommand{\De}{\Delta}
\newcommand{\de}{\delta}
\newcommand{\entraine}{\Longrightarrow}
\newcommand{\inj}{\hookrightarrow}
\newcommand{\recip}{\Longleftarrow}
\newcommand{\ssi}{\Longleftrightarrow}
\newcommand{\K}{\mathbbm{K}}
\newcommand{\A}{\mathcal{A}}
\newcommand{\R}{\mathbb{R}}
\newcommand{\C}{\mathbb{C}}
\newcommand{\N}{\mathbb{N}}
\newcommand{\Q}{\mathbb{Q}}
\newcommand{\1}{\mathbb{1}}
\newcommand{\0}{\mathbb{0}}
\newcommand{\Z}{\mathbbm{Z}}
\newcommand{\E}{\mathbbm{E}}
\newcommand{\F}{\mathbbm{F}}
\newcommand{\B}{\mathbbm{B}}
\newcommand{\M}{\mathcal{M}_{n}(\K)}

\font\bb=msbm10

\def\ent{{{\rm Z}\mkern-5.5mu{\rm Z}}}
\newtheorem{exo}{Exercice}
\newtheorem{idea}{Idea}

\newtheorem{pre}{Preuve}
\newtheorem{pro}{Propriété}
\newtheorem{exe}{Example}
\newtheorem{theorem}{Theorem}[section]
\newtheorem{proposition}{Proposition}
\newtheorem{definition}{Definition}[section]
\newtheorem{remark}{Remark}[section]
\newtheorem{lem}{Lemma}[section]
\begin{abstract}
The block structure of double saddle-point problems has prompted extensive research into efficient preconditioners. This paper introduces a novel class of three-by-three block preconditioners tailored for such systems from the time-dependent Maxwell
equations  or liquid crystal director modeling. The main motivation of this work is to highlight the limitations of recent preconditioners under high Reynolds numbers, as the original studies did not explore this scenario, and to demonstrate that our preconditioner outperforms the existing ones in such regimes. We thoroughly analyze the convergence and spectral properties of the proposed preocnditioner. We illustrate the efficiency of the proposed preconditioners, and
verify the theoretical bounds.
\end{abstract}
\begin{keyword}
Double saddle-point problems, Preconditioning, Krylov subspace method
\end{keyword}
\end{frontmatter}
\textbf{Notations.}
 The notation $\left(x;y;z\right)$ represents a column vector of dimension $N=n+m +l$. Throughout the paper, the identity matrix  denoted by $I$, will be used as needed with its size clear from the context.
\section{Introduction}\label{sec1}
In this paper,we investigate the performance of  new block preconditioner which can be used in the GMRES method \cite{gmres} in solving a class of $3\times 3$ block saddle-point systems of the form:
\begin{eqnarray}
\label{saddle}
\mathcal{A}_{-}\mathrm{w}=\left(\begin{array}{ccc}
A & B^{T}  &0  \\  -B & 0  & -C^{T} \\ 0 & C  & 0
\end{array} \right)\left(\begin{array}{c}
 \mathrm{x}\\ \mathrm{y}\\
 \mathrm{z}
\end{array} \right)=\underbrace{\left(\begin{array}{c}
\mathrm{f}\\ -\mathrm{g}\\
\mathrm{h}
\end{array} \right)}_{\mathrm{b}_{-}},\quad \mathcal{A}_{+}\mathrm{w}=\left(\begin{array}{ccc}
A & B^{T}  &0  \\  B & 0  & C^{T} \\ 0 & C  & 0
\end{array} \right)\left(\begin{array}{c}
 \mathrm{x}\\ \mathrm{y}\\
 \mathrm{z}
\end{array} \right)=\underbrace{\left(\begin{array}{c}
\mathrm{f}\\ \mathrm{g}\\
\mathrm{h}
\end{array} \right)}_{\mathrm{b}_{+}}
\end{eqnarray} where $A\in\mathbb{R}^{n\times n}$ denotes the symmetric positive definite (SPD) matrix, $B\in\mathbb{R}^{m\times n}$ and $C\in\mathbb{R}^{l\times m}$ represent  full row rank rectangular matrices, the vectors $\mathrm{f}\in\mathbb{R}^{n}$, $\mathrm{g}\in\mathbb{R}^{m}$ and $\mathrm{h}\in\mathbb{R}^{l}$, are vectors. Under these conditiones ensure that the coefficient matrix $\mathcal{A}$ of the linear equation (\ref{saddle}) is nonsingular. Linear systems such as  (\ref{saddle}) play a crucial role, including, the least squares problems \cite{Yuan1996},
finite element method for solving time-dependent Maxwell equations \cite{Chen2000}, the Ossen iteration method for the stationary incompressible magnetohydrodynamics systems \cite{YangZhang2020,HuXu2015}. Similar block structures also appear in liquid crystal director modeling or  the coupled Stokes-Darcy problem \cite{Chen2000,Cai2009,Chen2023}. Owing to  the $3 \times 3$ block structure of the saddle-point matrix in (\ref{saddle}), it is hard to present proper preconditioning technologies for solving the $3 \times 3$ block saddle-point problems. Of course, it is easy to see that the saddle-point matrix $\mathcal{A}$ can be viewed as a $2\times 2$ block matrix. In this case, generally speaking, there have been a lot of effective iterative methods for solving linear systems of the $2\times 2$  saddle-point form, see \cite{Benzi}. For example,  successive overrelaxation (SOR)-like methods \cite{Parlett, Wang2, Golub2,Guo}, Uzawa-type methods \cite{Wang1,Parlett, Wang2,Bramble,Zhang}, Hermitian and skew-Hermitian splitting  (HSS) iteration method, which was initially  introduced by Bai, Golub, and Ng in \cite{HSS}. Additionally, the  preconditioned HSS  (PHSS) iteration method was presented in \cite{PHSS}. Arguably the most prominent Krylov subspace methods for solving (\ref{saddle}) are preconditioned variants of MINRES \cite{Minres}  and GMRES \cite{gmres}. In contrast to GMRES, the previously-discovered MINRES algorithm can explicitly exploit the symmetry of $\mathcal{A}$.
However, due to its complex structure, directly solving the standard $2\times 2$ block saddle-point problem often requires more time and iterations. To improve the efficiency of iterative methods, 
new efficient preconditioners for the $3\times 3$ block saddle-point problems are employed, all combining in different ways the Schur complement matrices $S = BA^{-1}B^T$, and $X = CS^{-1}C^T$.  The  block diagonal (BD)  preconditioner \cite{Huang2019}:
\begin{equation}
P_{BD} = \begin{pmatrix}
A & 0 & 0 \\
0 & S & 0 \\
0 & 0 & C S^{-1}C^T
\end{pmatrix},
\end{equation}
where $S = B A^{-1}B^T$. Although the BD preconditioner matrix has a well-distributed set of eigenvalues, constructing the BD preconditioner is very time-consuming. Xie and Li \cite{Li} presented three preconditioners for solving (\ref{saddle}).
 For linear
system (\ref{saddle}), the suggestion of Cao \cite{Cao2019}  is a shift-splitting preconditioner $\mathcal{P}_{SS}$ and a relaxed version of the shift-splitting preconditioner $\mathcal{P}_{RSS}$ which are structured as follows:
\begin{equation}
\mathcal{P}_{SS} = \frac{1}{2} \begin{bmatrix} \alpha I + A & B^T & 0 \\ -B & \alpha I  &   - C^T\\ 0 & C & \alpha I \end{bmatrix},\hspace{0.3cm}\mathcal{P}_{RSS} = \frac{1}{2} \begin{bmatrix}  A & B^T & 0 \\ -B & \alpha I  &   - C^T\\ 0 & C & \alpha I \end{bmatrix}.
\end{equation}
The remainder of this paper is organized as follows. In Section $2$, we introduce a new preconditioning technique for the double saddle-point problem and analyze the spectral properties of the preconditioned system. Section $3$ presents numerical experiments that illustrate the effectiveness of the proposed method. Finally, conclusions and directions for future work are discussed in Section $4$.
\section{Preconditioning}
In this section, we study a regularization-based preconditioner for solving double  saddle-point problem~(\ref{saddle}), where $A \in \mathbb{R}^{n \times n}$ is a symmetric, large, sparse, and invertible matrix, and $B \in \mathbb{R}^{m \times n}$,  $C \in \mathbb{R}^{l \times m}$  are full row rank matrices. The central idea of preconditioning is to transform the linear system~(\ref{saddle})  into an equivalent system that is easier to solve. 
Applying left preconditioning to system (\ref{saddle}) yields the following transformed linear system:
\[
\mathcal{P}^{-1} \mathcal{A} \mathrm{w} = \mathcal{P}^{-1} b.
\]
Although right preconditioning is applicable in our context, this work focuses exclusively on left preconditioning. By taking advantage of the specific block structure of system  (\ref{saddle}), various block preconditioners have been proposed in the literature~\cite{Cao2019}. We introduce and analyze the matrix block preconditioner for (\ref{saddle}) defined by
\begin{equation}
\mathcal{P}_{R}=\left( \begin{array}{ccc}
	A & B^{T} & 0  \\ -B & S  & 0\\ 0  & C & \alpha I 
	\end{array} \right), \mathcal{P}_{RD}=\left( \begin{array}{ccc}
	A & B^{T} & 0  \\ B & \hat{S}  & 0\\ 0  & 0 & \alpha I 
	\end{array} \right) 
\label{Pr}
\end{equation}
with $\alpha > 0$ and $S=BA^{-1}B^{T}$ a symmetric and positive definite matrix, and  $\hat{S}$ its approximate.
\subsection{BLOCK PRECONDITIONER $\mathcal{P}_{R}$  AND EIGENVALUE ANALYSIS}
Let us consider the preconditioners in equation~(\ref{Pr}). We observe that these preconditioners are nonsingular since $A$ is symmetric positive definite and both $B$ and $C$ have full row rank. In the following, the eigenvalues of the preconditioned matrices corresponding to the proposed preconditioners are determined. 
\begin{theorem}
	\label{2p} 
	Let the preconditioner $\mathcal{P}_{R}$ be defined as in (\ref{Pr}). Then the matrix $\mathcal{P}^{-1}_{R}\mathcal{A}$ is diagonalizable and has $l + 2$ distinct eigenvalues $\{1, \frac{1}{2},\lambda_{1}  \dots,\lambda_{l}\}$.
\end{theorem}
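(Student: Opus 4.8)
The plan is to characterize the spectrum of $\mathcal{P}_{R}^{-1}\mathcal{A}_{-}$ through the generalized eigenvalue problem $\mathcal{A}_{-}v=\lambda\mathcal{P}_{R}v$ with $v=(\mathrm{x};\mathrm{y};\mathrm{z})\neq 0$ (legitimate since $\mathcal{P}_{R}$ is nonsingular), and then to establish diagonalizability by exhibiting $N=n+m+l$ linearly independent eigenvectors. Reading off the three block rows, the eigenproblem is equivalent to
\begin{align*}
(1-\lambda)\bigl(A\mathrm{x}+B^{T}\mathrm{y}\bigr)&=0,\\
-B\mathrm{x}-C^{T}\mathrm{z}&=\lambda\bigl(-B\mathrm{x}+S\mathrm{y}\bigr),\\
C\mathrm{y}&=\lambda\bigl(C\mathrm{y}+\alpha\mathrm{z}\bigr).
\end{align*}
The first row splits the analysis into $\lambda=1$ versus $A\mathrm{x}+B^{T}\mathrm{y}=0$, i.e. $\mathrm{x}=-A^{-1}B^{T}\mathrm{y}$. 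When $\lambda=1$, the third row forces $\alpha\mathrm{z}=0$, hence $\mathrm{z}=0$, the second row then reduces to $S\mathrm{y}=0$, so $\mathrm{y}=0$ because $S=BA^{-1}B^{T}\succ 0$, while $\mathrm{x}$ is free; thus $\lambda=1$ is a semisimple eigenvalue with the $n$-dimensional eigenspace $\{(\mathrm{x};0;0)\}$.

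When $\lambda\neq 1$, substituting $\mathrm{x}=-A^{-1}B^{T}\mathrm{y}$ turns $-B\mathrm{x}$ into $BA^{-1}B^{T}\mathrm{y}=S\mathrm{y}$, so the second row collapses to $(1-2\lambda)S\mathrm{y}=C^{T}\mathrm{z}$; nonsingularity of $\mathcal{A}_{-}$ gives $\lambda\neq 0$, so the third row yields $\mathrm{z}=\tfrac{1-\lambda}{\alpha\lambda}C\mathrm{y}$. If $C\mathrm{y}=0$ with $\mathrm{y}\neq 0$, then $(1-2\lambda)S\mathrm{y}=0$ forces $\lambda=\tfrac12$, with eigenvectors $(-A^{-1}B^{T}\mathrm{y};\mathrm{y};0)$, $\mathrm{y}\in\ker C$, spanning a space of dimension $\dim\ker C=m-l$. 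If $C\mathrm{y}\neq 0$, eliminating $\mathrm{z}$ gives $\lambda\alpha(1-2\lambda)S\mathrm{y}=(1-\lambda)C^{T}C\mathrm{y}$; applying $CS^{-1}$ and writing $X=CS^{-1}C^{T}$ shows $C\mathrm{y}$ must be an eigenvector of the $l\times l$ SPD matrix $X$, and if $X\eta=\mu_{i}\eta$ then $\lambda$ solves the scalar relation $2\alpha\lambda^{2}-(\alpha+\mu_{i})\lambda+\mu_{i}=0$. The values obtained this way are the $\lambda_{1},\dots,\lambda_{l}$, with eigenvector reconstructed from $\mathrm{y}=S^{-1}C^{T}\eta$ and the induced $\mathrm{x},\mathrm{z}$; substituting $\lambda\in\{0,1,\tfrac12\}$ into the relation shows none of them is a root, so the three families of eigenvalues are pairwise distinct.

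I expect the diagonalizability and multiplicity bookkeeping to be the main obstacle. One has to verify that the eigenvectors produced above — the $n$ vectors for $\lambda=1$, the $m-l$ vectors for $\lambda=\tfrac12$, and those attached to each eigenvalue $\mu_{i}$ of $X$ — are linearly independent and exhaust $\mathbb{C}^{N}$, which requires a careful comparison of algebraic and geometric multiplicities for the $X$-related eigenvalues and, in the non-generic cases where two of them coincide or $X$ has a repeated eigenvalue, a direct argument that no Jordan block can form. The hypotheses $A\succ 0$ and $B,C$ of full row rank are precisely what makes the count close up: they guarantee $S\succ 0$, $C^{T}$ injective, and $\dim\ker C=m-l$. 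The remaining work is routine but must be done cleanly — ruling out $\lambda=0$, and checking that on the branch $C\mathrm{y}\neq 0$ every admissible $\mathrm{y}$ lies in $S^{-1}\operatorname{range}(C^{T})$, so that the reduction to the spectrum of $X$ captures all such eigenpairs — after which assembling the $N$ independent eigenvectors yields both the diagonalizability and the eigenvalue list.
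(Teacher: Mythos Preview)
Your strategy --- set up $\mathcal{A}_{-}v=\lambda\mathcal{P}_{R}v$, split on $\lambda=1$, then on whether $C\mathrm{y}$ (equivalently $\mathrm{z}$) vanishes --- is exactly the paper's, and on one point you are actually more careful: you correctly note that an eigenvector with $\lambda=\tfrac12$ must satisfy $C\mathrm{y}=0$, so that eigenspace has dimension $m-l$. The paper instead asserts that \emph{any} $\mathrm{y}\in\mathbb{R}^{m}$ yields such an eigenvector and takes the dimension to be $m$, which is incorrect (the third block row forces $C\mathrm{y}=0$ whenever $\lambda\neq 1$ and $\mathrm{z}=0$).

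The gap in your argument is that, precisely because your $\tfrac12$-count is $m-l$ rather than $m$, the eigenvector tally no longer closes as written. With $n$ vectors for $\lambda=1$ and $m-l$ for $\lambda=\tfrac12$, you still need $2l$ more to reach $N=n+m+l$. Now the quadratic $2\alpha\lambda^{2}-(\alpha+\mu_{i})\lambda+\mu_{i}=0$ has \emph{two} roots for each eigenvalue $\mu_{i}$ of $X=CS^{-1}C^{T}$, and each root, paired with the eigenvector $\eta^{(i)}$ of $X$, yields one eigenvector of $\mathcal{P}_{R}^{-1}\mathcal{A}_{-}$ (take $\mathrm{z}=\eta^{(i)}$, $\mathrm{y}=\tfrac{1}{1-2\lambda}S^{-1}C^{T}\eta^{(i)}$, $\mathrm{x}=-A^{-1}B^{T}\mathrm{y}$). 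So the $X$-branch supplies $2l$ eigenvectors, not $l$, and only with both roots does the count reach $N$. Your sentence ``the values obtained this way are the $\lambda_{1},\dots,\lambda_{l}$'' suppresses this doubling, and the closing paragraph never commits to whether you are claiming $l$ or $2l$ vectors here. This is not a cosmetic slip: it shows that the statement ``$l+2$ distinct eigenvalues'' is itself inaccurate --- generically there are up to $2l+2$ (or $2l+1$ when $m=l$, since then $\ker C=\{0\}$ and $\tfrac12$ is absent). The paper's proof masks the discrepancy by overcounting the $\tfrac12$-eigenspace as $m$-dimensional and taking only $l$ vectors from the $X$-branch; the two errors happen to cancel in the total. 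Finally, the ``no Jordan block'' check you postpone is not routine: when the discriminant $(\alpha+\mu_{i})^{2}-8\alpha\mu_{i}$ vanishes the two roots coalesce and only one eigenvector arises from $\eta^{(i)}$, so diagonalizability genuinely fails in that degenerate case.
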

\begin{proof}
 Let   $\lambda$ and $(x; y;z)$ be  an eigenvalue and its corresponding eigenvector of  $\mathcal{P}_{R}^{-1}\mathcal{A}$. According to the relationship $\mathcal{A}\mathrm{w}= \lambda \mathcal{P}_{R}\mathrm{w}$ we have
\begin{equation}
\label{eq}
\left\{
\begin{aligned}
(1-\lambda)Ax+ (1 - \lambda)B^{T}y&=0, \\
(\lambda-1)Bx-C^{T}z=\lambda Sy,\\
(1-\lambda)Cy=\alpha\lambda z.
\end{aligned}
\right.
\end{equation}
If $\lambda=1$, it gives 

\begin{equation}
\label{eqlambda=1}
\left\{
\begin{aligned}
-C^{T}z=Sy,\\
\alpha z=0.
\end{aligned}
\right.
\end{equation}
Since $\alpha >0$, we have $z=0$. Since $S$ is symmetric positive definite matrix, then from the first equation of (\ref{eqlambda=1}) , we get $y=0$.  Thus, there are $n$ linearly independent eigenvectors $(x^{(i)};\hspace{0.1cm} 0;\hspace{0.1cm}0)$, $i = 1, \dots, n$, corresponding to the eigenvalue $1$, where the vectors $x^{(i)}$ are arbitrary and linearly independent. If $\lambda\neq 1$ and $z=0$, after simple algebraic calculations $y$ satisfies the following equation:
\begin{equation}
B A^{-1} B^T y = 2\lambda B A^{-1} B^T y,
\label{Seig}
\end{equation}
we deduce that
\[
\lambda = \frac{1}{2}.\]
Then,  there are \( m \) linearly independent eigenvectors of the form 
\[
(-A^{-1}y^{(i)};y^{(i)};0), \quad i = 1, \dots, m,
\]
where \( y^{(i)} \) are arbitrary linearly independent vectors. 
If $\lambda\neq \frac{1}{2}$ and  $z\neq 0$, then from the first and second equations of (\ref{eq}), we get 
\begin{eqnarray}
\label{valC}
CS^{-1} C^T z=\eta z, 
\end{eqnarray}
with $\eta = \dfrac{\alpha\lambda(2\lambda-1 )}{(\lambda-1 )}$.
Since \( C S^{-1} C^T \) is a positive definite matrix, there are \( l \) linearly independent eigenvectors of the form 
\( z^{(i)} \), are orthogonal eigenvectors of  $C S^{-1} C^T$ . 
Let \( U \), \( V \), and \( W \) be the matrices whose columns are
\[
X = \begin{bmatrix} x^{(1)} & \cdots & x^{(n)} \end{bmatrix}, \quad
U = \begin{bmatrix} u^{(1)} & \cdots & u^{(m)} \end{bmatrix}, \quad
Y = \begin{bmatrix} {y}^{(1)} & \cdots & {y}^{(m)} \end{bmatrix},
\quad
Z = \begin{bmatrix} {z}^{(1)} & \cdots & {z}^{(l)} \end{bmatrix},
\]
\[
V = \begin{bmatrix} {v}^{(1)} & \cdots & {v}^{(l)} \end{bmatrix},\quad
W = \begin{bmatrix} {w}^{(1)} & \cdots & {w}^{(l)} \end{bmatrix}
.\]
where $u^{(i)}=-A^{-1}B^{T}y^{(i)}$,  $w^{(i)}=((1-\lambda)BA^{-1}B^{T}+\lambda S)^{-1}z^{(i)}$ and $v^{(i)}=-A^{-1}B^{T}w^{(i)}$.
Then 
\[
\mathcal{Z} = \begin{bmatrix} X & U &  V\\ 0 & Y  & W\\ 0 & 0  &Z\end{bmatrix}
\]
is a matrix of eigenvectors of \( \mathcal
{P}_{R}^{-1} \mathcal{A} \). Moreover, since \( X \), \( Y \) and \( Z \) are nonsingular matrices, \( \mathcal{Z} \) is also nonsingular. Consequently, \( \mathcal
{P}_{R}^{-1} \mathcal{A}\) is diagonalizable.
    
\end{proof}
\begin{theorem}    
Let the conditions of Theorem 2.1 be satisfied. Then all eigenvalues of $\mathcal{P}_{R}^{-1}\mathcal{A}$ are real and
\[
\sigma(\mathcal{P}_{R}^{-1}\mathcal{A}) \subset \{1\} \cup \{\frac{1}{2}\}\cup \left[\frac{1}{4}(\lambda_{min}(S_{C})+1)\sqrt{1-\displaystyle\frac{8\lambda_{min}(S_{C})}{\alpha(\lambda_{max}(S_{C})+1)}}, \displaystyle\frac{1}{4}(\lambda_{max}(S_{C})+1)\sqrt{1-\displaystyle\frac{8\lambda_{max}(S_{C})}{\alpha(\lambda_{min}(S_{C})+1)}} \right] ,
\]
where   $S_{C}=C S^{-1}C^T$ ,
$\lambda_{\max}(\cdot)$ and $\lambda_{\min}(\cdot)$ denote the maximum and minimum eigenvalues of a matrix, and $\sigma(\cdot)$ represents the set of all eigenvalues of a matrix.
\end{theorem}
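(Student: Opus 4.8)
The plan is to push the scalar analysis begun in the proof of Theorem~\ref{2p} one step further. There it is shown that any eigenvalue $\lambda$ of $\mathcal{P}_{R}^{-1}\mathcal{A}$ other than $1$ and $\tfrac12$ is tied to an eigenvalue $\eta>0$ of the SPD matrix $S_{C}=CS^{-1}C^{T}$ by $\eta=\dfrac{\alpha\lambda(2\lambda-1)}{\lambda-1}$. Clearing the denominator turns this into the quadratic
\begin{equation}
2\alpha\lambda^{2}-(\alpha+\eta)\lambda+\eta=0,
\label{quadlam}
\end{equation}
whose roots are $\lambda=\lambda_{\pm}(\eta):=\dfrac{(\alpha+\eta)\pm\sqrt{(\alpha+\eta)^{2}-8\alpha\eta}}{4\alpha}$. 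Since $\eta$ only ranges over the finite positive set $\sigma(S_{C})\subseteq[\lambda_{\min}(S_{C}),\lambda_{\max}(S_{C})]$, the whole statement reduces to a one--parameter study of the two branches $\lambda_{\pm}(\eta)$ on this interval, together with the two isolated values $1$ and $\tfrac12$ already found.

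First I would settle reality. Diagonalizability, which Theorem~\ref{2p} provides, does not by itself force real eigenvalues, so I would argue directly that the discriminant $\Delta(\eta)=(\alpha+\eta)^{2}-8\alpha\eta=\eta^{2}-6\alpha\eta+\alpha^{2}$ is nonnegative for every $\eta\in\sigma(S_{C})$ --- equivalently $1-\dfrac{8\alpha\eta}{(\alpha+\eta)^{2}}\ge 0$, which is exactly the quantity appearing under the square roots in the statement. As $\Delta$ is a convex parabola in $\eta$ with negative minimum value, the requirement $\Delta\ge 0$ on $[\lambda_{\min}(S_{C}),\lambda_{\max}(S_{C})]$ forces this interval to lie entirely on one side of the two roots of $\Delta$, so it suffices to check $\Delta$ at the appropriate endpoint; this is precisely the restriction on $\alpha$ that must already be in force for the square roots in the statement to make sense. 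Together with $1,\tfrac12\in\mathbb{R}$ this proves the reality assertion.

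For the localization interval I would rewrite $\lambda_{\pm}(\eta)=\dfrac{\alpha+\eta}{4\alpha}\Bigl(1\pm\sqrt{1-\dfrac{8\alpha\eta}{(\alpha+\eta)^{2}}}\Bigr)$ and bound it monotonically over $\eta\in[\lambda_{\min}(S_{C}),\lambda_{\max}(S_{C})]$. The rational prefactor $\dfrac{\alpha+\eta}{4\alpha}$ is increasing in $\eta$, hence squeezed between its values at $\lambda_{\min}(S_{C})$ and $\lambda_{\max}(S_{C})$; these furnish the outer factors of the form $\tfrac14(\lambda_{\min}(S_{C})+1)$ and $\tfrac14(\lambda_{\max}(S_{C})+1)$ in the statement. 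The square-root factor is handled by bounding the ratio $\dfrac{8\alpha\eta}{(\alpha+\eta)^{2}}$ from above and below, replacing the $\eta$ in the numerator and the $\eta$ in the denominator by the two extreme eigenvalues of $S_{C}$; this numerator/denominator mismatch is exactly what makes $\lambda_{\min}(S_{C})$ appear inside one square root and $\lambda_{\max}(S_{C})$ inside the other. Multiplying the one-sided estimates, simplifying, and unioning with $\{1\}$ and $\{\tfrac12\}$ then yields the claimed inclusion.

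The step I expect to be the real obstacle is getting the reality condition and the monotonicity bookkeeping to fit together cleanly. The two branches $\lambda_{+}$ and $\lambda_{-}$ move in opposite directions with $\eta$, and the radicand $1-\dfrac{8\alpha\eta}{(\alpha+\eta)^{2}}$ is not monotone on $(0,\infty)$: it decreases and then increases, with its minimum at $\eta=\alpha$. Consequently the neat endpoint estimates only hold once $\alpha$ is chosen so that $[\lambda_{\min}(S_{C}),\lambda_{\max}(S_{C})]$ sits on one side of $\eta=\alpha$, and one must make that case distinction explicit --- and verify that the extreme values of $\eta$, not some interior point, actually deliver the extreme values of $\lambda_{\pm}(\eta)$. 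Once (\ref{quadlam}) is in hand, the remaining work is elementary manipulation of a quadratic together with bookkeeping of the constants in the stated bounds.
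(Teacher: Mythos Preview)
Your overall strategy is the same as the paper's: reduce to the scalar relation $\eta=\dfrac{\alpha\lambda(2\lambda-1)}{\lambda-1}$ from Theorem~\ref{2p}, clear denominators to get a quadratic in $\lambda$, solve it, and then push $\eta$ between $\lambda_{\min}(S_{C})$ and $\lambda_{\max}(S_{C})$ to obtain the interval. So at the level of method there is nothing new to add.

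There is, however, a genuine discrepancy worth flagging. Your quadratic \eqref{quadlam}, $2\alpha\lambda^{2}-(\alpha+\eta)\lambda+\eta=0$, is the correct clearing of $\eta(\lambda-1)=\alpha\lambda(2\lambda-1)$. The paper instead writes $2\alpha\lambda^{2}-\alpha(\eta+1)\lambda+\eta=0$, which only agrees with yours when $\alpha=1$. All of the displayed roots in the paper, and hence the prefactors $\tfrac14(\eta+1)$ and the radicands $1-\dfrac{8\eta}{\alpha(\eta+1)}$ that appear in the stated bounds, come from this miscomputed coefficient. Your roots $\lambda_{\pm}(\eta)=\dfrac{\alpha+\eta}{4\alpha}\bigl(1\pm\sqrt{1-\tfrac{8\alpha\eta}{(\alpha+\eta)^{2}}}\bigr)$ are the right ones, but they will not reproduce the literal expressions in the theorem; so your proof would prove a corrected version of the statement rather than the statement as printed.

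On the reality assertion you are also more careful than the paper. The paper disposes of it with the unannounced hypothesis ``Notice that $\alpha\ge 2$'', which for its (incorrect) quadratic does force the discriminant $\alpha(\eta+1)^{2}-8\eta\ge 0$ for every $\eta>0$, since $\tfrac{8\eta}{(\eta+1)^{2}}\le 2$. For the correct quadratic the discriminant $\eta^{2}-6\alpha\eta+\alpha^{2}$ is negative on $((3-2\sqrt2)\alpha,(3+2\sqrt2)\alpha)$, so --- exactly as you anticipate --- a genuine condition relating $\alpha$ to $\sigma(S_{C})$ is needed, and the paper's blanket ``$\alpha\ge 2$'' no longer suffices. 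Your remark that the radicand is not monotone in $\eta$ and that one must place $[\lambda_{\min}(S_{C}),\lambda_{\max}(S_{C})]$ on one side of $\eta=\alpha$ is likewise a refinement absent from the paper, which simply substitutes endpoints without justification.
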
 
\medskip
\begin{proof}
Suppose that ${\lambda}$ is an arbitrary eigenvalue of $\mathcal{P}_{R}^{-1}\mathcal{A}$. From previous discussions, 
\[
\lambda=1, \quad \lambda=\frac{1}{2} \quad \text{or} \quad\eta=\dfrac{\alpha\lambda(2\lambda-1 )}{(\lambda-1 )}=\frac{s^*(C S^{-1} C^T) s}{s^{*}s}=\frac{s^*S_{C} s}{s^{*}s}, 
\]
for some $s \neq 0$. 
Hence, when $\lambda \neq 1$ and $\lambda \neq \frac{1}{2}$ we obtain
\begin{eqnarray}
\label{Ineq}
\displaystyle\lambda_{\min}(S_{C}) \displaystyle\leq \eta=\dfrac{\alpha\lambda(2\lambda-1 )}{(\lambda-1 )}\displaystyle\leq \lambda_{\max}(S_{C}),
\end{eqnarray}
$\displaystyle \eta=\dfrac{\alpha\lambda(2\lambda-1 )}{(\lambda-1 )}$ can be written as follows: 
\begin{eqnarray}
\label{eq3}
    2\alpha\lambda^{2}-\alpha(\eta+1)\lambda+\eta=0.
\end{eqnarray}
Notice that $\alpha \geq 2$. As a result, it is immediate to see that the roots of (\ref{eq3}) are real and given by
\[
\lambda_1 = \frac{1}{4}(\eta+1)\left(1+\sqrt{1-\displaystyle\frac{8\alpha\eta}{\alpha^2(\eta+1)}}\right), \quad \lambda_2 = \frac{1}{4}(\eta+1)\left(1-\sqrt{1-\displaystyle\frac{8\alpha\eta}{\alpha^2(\eta+1)}}\right).
\]
By using (\ref{Ineq}), it is not difficult to see that:
\begin{eqnarray}
\frac{1}{4}(\lambda_{min}(S_{C})+1)\left(1+\sqrt{1-\displaystyle\frac{8\lambda_{min}(S_{C})}{\alpha(\lambda_{max}(S_{C})+1)}}\right)\leq \lambda_1 &\leq& \frac{1}{4}(\lambda_{max}(S_{C})+1)\left(1+\sqrt{1-\displaystyle\frac{8\lambda_{max}(S_{C})}{\alpha(\lambda_{min}(S_{C})+1)}}\right)\nonumber,
\end{eqnarray}
and 
\begin{eqnarray}
\frac{1}{4}(\lambda_{min}(S_{C})+1)\left(1-\sqrt{1-\displaystyle\frac{8\lambda_{min}(S_{C})}{\alpha(\lambda_{max}(S_{C})+1)}}\right)\leq \lambda_2 &\leq& \frac{1}{4}(\lambda_{max}(S_{C})+1)\left(1-\sqrt{1-\displaystyle\frac{8\lambda_{max}(S_{C})}{\alpha(\lambda_{min}(S_{C})+1)}}\right)\nonumber,
\end{eqnarray}
then the proof is completed.
\end{proof}
   \textbf{Remark}
\begin{enumerate}
    \item When the hypotheses of the preceding theorem are satisfied,  we can see that the bound depends only on the distribution of the eigenvalues $\{1, \frac{1}{2},\lambda_1, \lambda_{2}\}$. 
    \item If $\lambda$ satisfies the  eigenvalue problem (\ref{valC}), then when \\ $\alpha \to \infty $, $\displaystyle\frac{  (\lambda_{min}(S_{C})+1)}{2}\leq \lambda_{1} \leq \displaystyle\frac{  (\lambda_{max}(S_{C})+1)}{2}$.
 \item    
    If  $\lambda$ satisfies the  eigenvalue problem (\ref{valC}), then
     when $\alpha \to \infty $, $\lambda_{2}\to 0$.
\end{enumerate}
\subsection{Algorithmic implementation  of the regularized
preconditioner  $\mathcal{P}_{R}$.}
In this part, we display the algorithmic implementation of $\mathcal{P}_{R}$,
in which inside Krylov subspace methods, the SPD subsystems were solved inexactly by the preconditioned conjugate gradient (PCG) method using loose tolerances. More precisely, the inner PCG method for linear systems with coefficient matrix $A$, $A + B^{T}{S}^{-1} B$ was terminated when the relative residual norm was below $10^{-6}$, when the maximum number of $100$ iterations was reached. The preconditioner for the PCG method is incomplete Cholesky factorizations constructed using the  function \texttt{ichol(., opts)} where \text{opts.type = 'ict'} with drop tolerance $10^{-2}$. Every step of the Krylov subspace
method, such as the GMRES method, is used in combination with the proposed preconditioner to solve the double saddle-point
problem $(\ref{saddle})$.
We summarize the implementation of preconditioners 
$\mathcal{P}_{R}$  in
Algorithms $1$.
For the linear systems corresponding to $A+  B^T {S}^{-1} B$.
\begin{itemize}
\label{approI}
    \item  Since $A+B^T \hat{S}^{-1} B$ 
    is  SPD matrix, we solve the linear systems corresponding to this  matrix independently
by the preconditioned conjugate gradient (PCG) method, the matrix is  formed  inside PCG  with incomplete Cholesky preconditioning, ichol(A).
\end{itemize}
As a result, we summarize the implementation of the preconditioners $\mathcal{P}_{R}$ 
in the form of the following algorithm:
 \begin{algorithm}[H]
 \label{algo1}
\begin{algorithmic}
\caption{: Computation of $(x; y;z) = \mathcal{P}_{R}^{-1} (r_1; r_2; r_3 )$} 
 \State Step 1. Solve $(A+B^T \hat{S}^{-1} B)x = r_1-B^{T}\hat{S}^{-1}g$ for $x$, where $\hat{S}$ is a diagonal matrix;
  \State Step 2. Solve $y =\hat{S}^{-1}(Bx+r_2)$ for $y$;
   \State  Step 3. Set $z=\displaystyle\frac{1}{\alpha} (r_3-Cy)$.
\end{algorithmic}
\end{algorithm}
\section{Numerical Experiments}
In this section, we use the following numerical examples to assess the relative efficiencies of the preconditioner $\mathcal{P}_{R}$  described in Section $2$. The programs are performed on a computer with an Intel Core i7-10750H CPU @ 2.60 GHz processor and 16.0 GB RAM using MATLAB R2020b. A natural choice of the matrix $\hat{S}$ is $\hat{S}=\alpha I+\displaystyle\frac{1}{\alpha }C^{T}C$. The parameter of the preconditioner $\mathcal{P}_{RSS}$ is taken as $\alpha =0.01$.
In the related tables, we denote by "Iter", the total required number of outer GMRES iterations  and we denote by "CPU"
the computational time in unit of second. The total number of inner GMRES (PCG) and FGMRES (PCG) iterations to solve sub-systems with coefficient matrix $A+ \displaystyle B^T \hat{S}^{-1} B$  are reported under $\text{Iter}_{\text{pcg}}$. No restart is used for either  GMRES  and FGMRES iterations. The initial guess is taken to be the zero vector and the iterations are stopped as soon as:
\[
\frac{\| {b}_{-}-\mathcal{A}_{-} \textbf{w}_k  \|_2}{\|{b}_{-} \|_2} \leq tol,
\]
where \( \textbf{w}_k \) is the computed \( k \)-th approximate solution. In the tables, we also include the relative error, relative residual, 
\begin{itemize}
    \item \[
\text{Err} := \frac{\| \textbf{w}_k - \textbf{w}^* \|_2}{\| \textbf{w}^* \|_2},
\]
\item $\mathrm{Res}$: norm of absolute relative residual is defined as :
\[
\text{Res} := 
\frac{\|{b}-\mathcal{A} \textbf{w}_k \|_2}{\|{b}\|_{2}},
\]

\end{itemize}
where \( \textbf{w}^* \) and \( \textbf{w}_k \) are respectively, the exact solution and its approximation obtained in the \( k \)-th iterate. 
\begin{exe}
As the first example, we consider the linear system of equations~(\ref{saddle}), where the block matrices are defined as
\[
A =
\begin{bmatrix}
I \otimes T + T \otimes I & 0 \\
0 & I \otimes T + T \otimes I
\end{bmatrix} \in \mathbb{R}^{2p^2 \times 2p^2}, \quad
B =
\begin{bmatrix}
I \otimes F \\
F \otimes I
\end{bmatrix} \in \mathbb{R}^{p^2 \times 2p^2}, \quad
C = E \otimes F \in \mathbb{R}^{p^2 \times p^2},
\]
where the symbol $\otimes$ denotes the Kronecker product, and $h = \frac{1}{p+1}$ is the discretization size. The matrices $T$, $F$, and $E$ are given by
\[
T = \frac{\nu}{h^2} \cdot \text{tridiag}(-1, 2, -1) \in \mathbb{R}^{p \times p}, \quad
F = \frac{1}{h} \cdot \text{tridiag}(0, 1, -1) \in \mathbb{R}^{p \times p}, \quad
E = \text{diag}(1, p+1, \ldots, p^2 - p + 1).
\]
\end{exe}
In following  Tables, we list numerical results with respect to Iter, $\mathrm{CPU}$,  Res and Err. To implement the $\mathcal{P}_{R}$ preconditioner efficiently, we need to choose the parameters    $\alpha$ appropriately since the analytic determination of the parameters which results in the fastest convergence of the preconditioned GMRES and FGMRES iterations appears to be quite a difficult problem. The three figures 1, 2 and 3, illustrate the CPU times of the preconditioned GMRES (PGMRES) method with different preconditioners  $\mathcal{P}_{BD}$ ,  $\mathcal{P}_{SS}$ ,  $\mathcal{P}_{RSS}$ , and  $\mathcal{P}_{R}$  for varying grid sizes and viscosities $(\nu=1,0.1,0.01)$. The x-axis represents the mesh resolution (grid size), while the y-axis reports CPU time in seconds.
\begin{table}[H]
\centering
\caption{Numerical results of the $\mathcal{P}$GMRES methods for $\nu=1$ $(tol=1e-12)$}
		\begin{tabular}{ |p{2.8cm}|p{0.9cm}|p{1.7cm}||p{1.7cm}||p{1.5cm}|p{1.5cm}|p{1.5cm}| }
			\hline
			Grid parameter &    &  $\mathcal{P}_{BD}$&$\mathcal{P}_{SS}$ & $\mathcal{P}_{RSS}$& $\mathcal{P}_{R}$    \\
			\hline
			$16\times 16$&Iter $\text{Iter}_{pcg}$ CPU Err Res   
    &  42\hspace{1.6cm}10\hspace{1.1cm} 0.20\hspace{1.6cm} 2.58e-08 \hspace{0.2cm}   4.30e-13&  13\hspace{1.6cm}7\hspace{1.1cm} 0.084\hspace{1.6cm} 1.58e-08 \hspace{0.2cm}   4.30e-14
			& 12\hspace{1.6cm}7\hspace{1.1cm} 0.08\hspace{1.6cm} 6.58e-09 \hspace{0.2cm}   7.30e-14& 8\hspace{1.6cm}6\hspace{1.1cm} 0.01\hspace{1.6cm} 1.58e-08 \hspace{0.2cm}   4.30e-13    
			\\
			\hline 
			$32\times 32$& Iter $\text{Iter}_{pcg}$ CPU Err Res & 53\hspace{1.6cm}10\hspace{1.1cm} 1.52\hspace{1.6cm}  7.64e-07\hspace{0.2cm}   1.22e-13& 
			13\hspace{1.6cm}7\hspace{1.1cm} 0.12\hspace{1.6cm}  6.64e-08\hspace{0.2cm}   3.22e-14 & 12\hspace{1.6cm}7\hspace{1.1cm} 0.09\hspace{1.6cm}  3.64e-08\hspace{0.2cm}   5.27e-14    &8\hspace{1.8cm}6\hspace{1.1cm} 0.04 \hspace{1.8cm}3.78e-06\hspace{0.8cm}4.30e-13  
			\\
			\hline
	$64\times 64$&Iter $\text{Iter}_{pcg}$ CPU Err Res    &  56\hspace{1.8cm}10\hspace{1.1cm}  8.53 \hspace{0.8cm}3.13e-06\hspace{0.8cm} 1.98e-13 & 13\hspace{1.8cm}7\hspace{1.1cm}  0.28 \hspace{0.8cm}2.83e-06\hspace{0.8cm} 6.98e-14& 12\hspace{1.8cm}7\hspace{1.1cm}  0.22 \hspace{0.8cm}2.83e-06\hspace{0.8cm} 6.98e-14&8\hspace{1.8cm}6\hspace{1.1cm}  0.16 \hspace{0.8cm}1.60e-05 \hspace{0.8cm} 2.04e-13 
\\
\hline
    $132\times 132$&Iter $\text{Iter}_{pcg}$ CPU Err Res    &  61\hspace{1.8cm}10\hspace{1.1cm} 18.90 \hspace{0.8cm}1.14e-05 \hspace{0.8cm} 6.12e-13 & 13\hspace{1.8cm}7\hspace{1.1cm} 2.90 \hspace{0.8cm}7.94e-06 \hspace{0.8cm} 7.10e-14& 12\hspace{1.8cm}7\hspace{1.1cm} 2.11 \hspace{0.8cm}7.94e-06 \hspace{0.8cm} 7.10e-14&8\hspace{1.8cm}6\hspace{1.1cm} 1.18 \hspace{0.8cm}1.73e-04 \hspace{0.8cm} 6.88e-13 
    \\
    \hline
       $160\times 160$&Iter $\text{Iter}_{pcg}$ CPU Err Res      & 64\hspace{1.8cm}7\hspace{1.1cm} 20.12 \hspace{0.8cm}7.94e-06 \hspace{0.8cm} 7.10e-13& 13\hspace{1.8cm}7\hspace{1.1cm} 4.11 \hspace{0.8cm}7.94e-06 \hspace{0.8cm} 7.10e-13& 12\hspace{1.8cm}7\hspace{1.1cm} 3.82 \hspace{0.8cm}3.56e-03 \hspace{0.8cm} 6.74e-13&8\hspace{1.8cm}6\hspace{1.1cm} 1.93 \hspace{0.8cm}3.56e-04\hspace{0.8cm} 2.43e-13 
       \\
     \hline
    
		\end{tabular}

\label{tab:v=1,GMRES}
\end{table}

\begin{table}[H]
\centering
\caption{Numerical results of the $\mathcal{P}$GMRES methods for $\nu=0.1$ $(tol=1e-12)$}
		\begin{tabular}{ |p{2.8cm}|p{0.9cm}||p{1.7cm}||p{1.5cm}|p{1.5cm}|p{1.5cm}| }
			\hline
			Grid parameter &   &$\mathcal{P}_{BD}$&$\mathcal{P}_{SS}$ & $\mathcal{P}_{RSS}$& $\mathcal{P}_{R}$    \\
			\hline
			$16\times 16$&Iter $\text{Iter}_{pcg}$ CPU Err Res   
    &  47\hspace{1.6cm}13\hspace{1.6cm}0.24\hspace{1.6cm} 6.70e-06 \hspace{0.2cm}   9.80e-13&  
		19\hspace{1.6cm}10\hspace{1.6cm}0.04\hspace{1.6cm} 5.70e-06 \hspace{0.2cm}   2.80e-13	& 18\hspace{1.6cm}10\hspace{1.6cm}0.03\hspace{1.6cm} 1.90e-06 \hspace{0.2cm}   1.50e-13 & 12\hspace{1.6cm}8\hspace{1.6cm}0.02\hspace{1.6cm} 4.86e-08 \hspace{0.2cm}   2.41e-13    
			\\
			\hline 
			$32\times 32$&Iter $\text{Iter}_{pcg}$ CPU Err Res         &58\hspace{1.8cm}13\hspace{1.6cm} 1.69 \hspace{1.8cm}8.23e-05\hspace{0.8cm}4.21e-13 & 
		19\hspace{1.8cm}10\hspace{1.6cm} 0.16 \hspace{1.8cm}7.22e-05\hspace{0.8cm}2.21e-13 	&  18\hspace{1.8cm}10\hspace{1.6cm} 0.1 \hspace{1.8cm}3.28e-05\hspace{0.8cm}1.01e-13 &12\hspace{1.8cm}8\hspace{1.6cm} 0.05 \hspace{1.8cm}7.74e-04\hspace{0.8cm}9.62e-13  
			\\
			\hline
	$64\times 64$&Iter $\text{Iter}_{pcg}$ CPU Err Res     & 61\hspace{1.8cm}13\hspace{1.6cm}  9.14 \hspace{0.8cm}9.29e-03 \hspace{0.8cm} 9.22e-13 &19\hspace{1.8cm}10\hspace{1.6cm}  0.54 \hspace{0.8cm}8.29e-04 \hspace{0.8cm} 9.20e-14  & 18\hspace{1.8cm}10\hspace{1.6cm}  0.48 \hspace{0.8cm}7.29e-04 \hspace{0.8cm} 6.76e-14 &12\hspace{1.8cm}8\hspace{1.6cm}  0.30 \hspace{0.8cm}2.81e-03 \hspace{0.8cm} 2.22e-13  
\\
\hline
    $132\times 132$&Iter $\text{Iter}_{pcg}$ CPU Err Rres      &67\hspace{1.8cm}13\hspace{1.6cm} 23.10 \hspace{0.8cm}5.21e+00   \hspace{0.8cm} 9.78e-13 & 18\hspace{1.8cm}10\hspace{1.6cm} 4.48 \hspace{0.8cm}3.21e+00   \hspace{0.8cm} 7.78e-13& 17\hspace{1.8cm}10\hspace{1.6cm} 3.48 \hspace{0.8cm}1.23e+00   \hspace{0.8cm} 6.75e-13 &10\hspace{1.8cm}8\hspace{1.6cm} 1.90 \hspace{0.8cm}1.73e-04 \hspace{0.8cm} 6.88e-13 
    \\
    \hline
       $160\times 160$&Iter $\text{Iter}_{pcg}$ CPU Err Res      & 70\hspace{1.8cm}13\hspace{1.6cm} 26.20 \hspace{0.8cm}5.21e+00   \hspace{0.8cm} 9.88e-13& 18\hspace{1.8cm}10\hspace{1.6cm} 6.83 \hspace{0.8cm}3.41e+00 \hspace{0.8cm} 7.31e-13& 17\hspace{1.8cm}10\hspace{1.6cm} 5.83 \hspace{0.8cm}2.41e+00 \hspace{0.8cm} 5.37e-13&9\hspace{1.8cm}8\hspace{1.6cm} 2.50 \hspace{0.8cm}1.37e-03 \hspace{0.8cm} 5.17e-13 
       \\
     \hline
    
		\end{tabular}

\label{tab:v=0.1,GMRES}
\end{table}

\begin{table}[H]
\centering
\caption{Numerical results of the $\mathcal{P}$GMRES methods for $\nu=0.01$ $(tol=1e-12)$}
		\begin{tabular}{ |p{2.8cm}|p{0.9cm}|p{1.7cm}||p{1.5cm}|p{1.5cm}|p{1.5cm}| }
			\hline
			Grid parameter &   &$\mathcal{P}_{BD}$&$\mathcal{P}_{SS}$ & $\mathcal{P}_{RSS}$& $\mathcal{P}_{R}$   \\
			\hline
			$16\times 16$&Iter $\text{Iter}_{pcg}$ CPU Err Rres   
    &  61\hspace{1.6cm}19\hspace{1.6cm}0.34\hspace{1.6cm} 8.16e-06 \hspace{0.2cm}   4.85e-13&  
35\hspace{1.6cm}15\hspace{1.6cm}0.09\hspace{1.6cm} 3.16e-06 \hspace{0.2cm}   4.85e-14	& 34\hspace{1.6cm}15\hspace{1.6cm}0.07\hspace{1.6cm} 1.56e-06 \hspace{0.2cm}   6.35e-14    & 16\hspace{1.6cm}15\hspace{1.6cm}0.03\hspace{1.6cm} 1.55e-07 \hspace{0.2cm}   6.41e-13    
			\\
			\hline 
			$32\times 32$&Iter $\text{Iter}_{pcg}$ CPU Err Res         & 68\hspace{1.6cm}15\hspace{1.6cm}0.46\hspace{1.6cm} 1.16e-04 \hspace{0.2cm}   7.87e-13&   34\hspace{1.8cm}19\hspace{1.6cm} 0.36 \hspace{1.8cm}4.10e-02\hspace{0.8cm}7.41e-13 &33\hspace{1.8cm}15\hspace{1.6cm} 0.26 \hspace{1.8cm}3.10e-02\hspace{0.8cm}6.41e-13 
			   &15\hspace{1.8cm}13\hspace{1.6cm} 0.11 \hspace{1.8cm}2.68e-05\hspace{0.8cm}4.07e-13 
			\\
			\hline
	$64\times 64$&Iter $\text{Iter}_{pcg}$ CPU Err Res      &71\hspace{1.8cm} 16\hspace{1.6cm}10.55 \hspace{0.8cm}1.84e+01  \hspace{0.8cm} 7.60e-13 &33\hspace{1.8cm}15\hspace{1.6cm} 1.63\hspace{0.8cm}4.21e+00 \hspace{0.8cm}1.12e-12  & 32\hspace{1.8cm}15\hspace{1.6cm} 1.13 \hspace{0.8cm}2.61e+00 \hspace{0.8cm}8.92e-13 &14\hspace{1.8cm}13\hspace{1.6cm} 0.56 \hspace{0.8cm}4.99e-04 \hspace{0.8cm} 5.59e-13 
\\
\hline
    $132\times 132$&Iter $\text{Iter}_{pcg}$ CPU Err Res      & 81\hspace{1.8cm} 16\hspace{1.6cm}33.50 \hspace{0.8cm}4.94e+01  \hspace{0.8cm} 7.60e-13& 34\hspace{1.8cm}15\hspace{1.6cm} 10.22 \hspace{0.8cm}3.26e+01 \hspace{0.8cm}  8.46e-13&33\hspace{1.8cm}15\hspace{1.6cm} 9.32 \hspace{0.8cm}1.06e+01 \hspace{0.8cm}  6.66e-13 &13\hspace{1.8cm}13\hspace{1.6cm} 2.73 \hspace{0.8cm}4.15e-02 \hspace{0.8cm} 1.34e-13 
    \\
    \hline
       $160\times 160$&Iter $\text{Iter}_{pcg}$ CPU Err Res      & 92\hspace{1.8cm} 16\hspace{1.6cm}38.50 \hspace{0.8cm}5.94e+01  \hspace{0.8cm} 8.60e-13& 34\hspace{1.8cm}15\hspace{1.6cm} 16.38 \hspace{0.8cm} 8.21e+00 \hspace{0.8cm}  7.87e-12 &33\hspace{1.8cm}15\hspace{1.6cm} 15.28 \hspace{0.8cm} 7.26e+00 \hspace{0.8cm}  6.87e-13 &12\hspace{1.8cm}13\hspace{1.6cm} 4.46 \hspace{0.8cm}1.37e-02 \hspace{0.8cm} 5.17e-13 
       \\
     \hline
    
		\end{tabular}

\label{tab:v=0.01,GMRES}
\end{table}

In Tables above we present the numbers of GMRES iterations and CPU times required to solve the system (\ref{saddle})  to a tolerance of $1e-12$, for each problem setup described above. We note that for  $p=32$, $p=64$, $p=132$
and $p=160$, the linear systems have dimensions of (respectively) $12288$,  $52272$, and
$76800$.
According to Tables~\ref{tab:v=1,GMRES}, \ref{tab:v=0.1,GMRES} and \ref{tab:v=0.01,GMRES}, it can be observed that the  regularized preconditioner $\mathcal{P}_{R}$
 requires lowest $\mathrm{Iter}$ and CPU time, which implies that the  $\mathcal{P}_{R}$GMRES method is superior to the  $\mathcal{P}_{BD}$GMRES, $\mathcal{P}_{SS}$GMRES and $\mathcal{P}_{RSS}$GMRES
 methods in terms of computing efficiency. This is evident as the $\mathcal{P}_{R}$GMRES requires less CPU time. The  $\mathcal{P}_{R}$  
 preconditioner  performs less effectively, particularly when the  viscosity $\nu$ decrease. The $\mathcal{P}_{R}$GMRES remains highly efficient even for larger  parameter $p$ and  for smalest values of $\nu$. The $\mathcal{P}_{BD}$, $\mathcal{P}_{SS}$, and $\mathcal{P}_{RSS}$ preconditioners perform less effectively,
particularly when $\nu$ parameter  decrease. It should be noted that if $\nu$ parameter is decreased beyond $0.01$, the $\mathcal{P}_{BD}$, $\mathcal{P}_{SS}$, and $\mathcal{P}_{RSS}$ preconditioners encounter limitations. This is due to the fact that the eigenvalues of the preconditioned matrix become highly clustered into many groups, which
negatively affects the convergence rate and the stability of the preconditioned GMRES method. With the use of our proposed preconditioner $\mathcal{P}_{R}$, we overcome this limitation and significantly improve the efficiency of the preconditioned GMRES, even for very small values of the parameter $\nu$.\\
 We list the numerical results of the  FGMRES methods with  various values of $\nu$ in Tables $\ref{tabcc:v=1,FGMRES}$, $\ref{tabcc:v=0.1,FGMRES}$ and $\ref{tabcc:v=0.01,FGMRES}$.\\
\begin{table}[H]
\centering
\caption{Numerical results of the preconditioned FGMRES methods for $\nu=1$ $(tol=1e-7)$}
		\begin{tabular}{ |p{2.8cm}|p{0.9cm}||p{1.7cm}||p{1.5cm}|p{1.5cm}|p{1.5cm}| }
			\hline
			Grid parameter &   &$\mathcal{P}_{BD}$&$\mathcal{P}_{SS}$ & $\mathcal{P}_{RSS}$& $\mathcal{P}_{R}$  \\
			\hline
			$16\times 16$&Iter $\text{Iter}_{pcg}$ CPU Err Res   
    &  40\hspace{1.6cm}10\hspace{1.6cm} 0.18\hspace{1.6cm}  3.15e-06 \hspace{0.2cm}     8.57e-08 & 11\hspace{1.6cm}7\hspace{1.6cm} 0.02\hspace{1.6cm}  4.50e-07 \hspace{0.2cm}     7.57e-08 &  10\hspace{1.6cm}7\hspace{1.6cm} 0.02\hspace{1.6cm}  1.50e-07 \hspace{0.2cm}     1.57e-08    & 10\hspace{1.6cm}6\hspace{1.6cm} 0.001\hspace{1.6cm}  4.60e-04 \hspace{0.2cm}      5.83e-08    
			\\
			\hline 
			$32\times 32$&Iter $\text{Iter}_{pcg}$ CPU Err Res      &  50\hspace{1.6cm}10\hspace{1.6cm} 1.50\hspace{1.6cm}  5.15e-06 \hspace{0.2cm}     5.57e-08 & 11\hspace{1.8cm}7\hspace{1.6cm}  0.18 \hspace{1.8cm}  3.68e-06\hspace{0.8cm}8.17e-08& 10\hspace{1.8cm}7\hspace{1.6cm}  0.09 \hspace{1.8cm}  1.78e-06\hspace{0.8cm}5.97e-08
			  &10\hspace{1.8cm}6\hspace{1.6cm}  0.03 \hspace{1.8cm} 1.28e-05\hspace{0.8cm}7.41e-08   
			\\
			\hline
	$64\times 64$&Iter $\text{Iter}_{pcg}$ CPU Err Res      & 54\hspace{1.6cm}10\hspace{1.6cm} 8.72\hspace{1.6cm}  4.15e-06 \hspace{0.2cm}     3.57e-08& 11\hspace{1.8cm}7\hspace{1.6cm}  0.14 \hspace{0.8cm}8.34e-05 \hspace{0.8cm}  8.17e-08& 10\hspace{1.8cm}7\hspace{1.6cm}  0.14 \hspace{0.8cm}7.30e-05 \hspace{0.8cm}  7.37e-08 &10\hspace{1.8cm}6\hspace{1.6cm}  0.12 \hspace{0.8cm}1.90e-05 \hspace{0.8cm}  1.96e-08  \\
\hline
    $132\times 132$&Iter $\text{Iter}_{pcg}$ CPU Err Res      & 58\hspace{1.8cm}10\hspace{1.6cm}  9.13 \hspace{0.8cm} 6.22e-04 \hspace{0.8cm}  4.11e-08& 11\hspace{1.8cm}7\hspace{1.6cm}  2.37 \hspace{0.8cm} 5.22e-04 \hspace{0.8cm}  6.14e-08& 10\hspace{1.8cm}7\hspace{1.6cm}  1.97 \hspace{0.8cm} 1.20e-04 \hspace{0.8cm}  4.54e-08
&10\hspace{1.8cm}6\hspace{1.6cm}  0.97 \hspace{0.8cm}1.05e-04 \hspace{0.8cm} 1.61e-08 
    \\
    \hline
       $160\times 160$&Iter $\text{Iter}_{pcg}$ CPU Err Res     & 61\hspace{1.8cm}10\hspace{1.6cm}  10.39 \hspace{0.8cm}2.12e-03 \hspace{0.8cm}    7.51e-08& 11\hspace{1.8cm}7\hspace{1.6cm}  3.39 \hspace{0.8cm}3.12e-03 \hspace{0.8cm}    9.51e-08& 10\hspace{1.8cm}7\hspace{1.6cm}  2.79 \hspace{0.8cm}1.92e-04 \hspace{0.8cm}    5.71e-08
 &10\hspace{1.8cm}6\hspace{1.6cm}  1.54 \hspace{0.8cm}1.36e-04 \hspace{0.8cm}  1.22e-08 
       \\
     \hline
    
		\end{tabular}

\label{tabcc:v=1,FGMRES}
\end{table}

\begin{table}[H]
\centering
\caption{Numerical results of the preconditioned FGMRES methods for $\nu=0.1$ $(tol=1e-7)$}
		\begin{tabular}{ |p{2.8cm}|p{0.9cm}||p{1.7cm}||p{1.5cm}|p{1.5cm}|p{1.5cm}| }
			\hline
			Grid parameter &  &$\mathcal{P}_{BD}$&$\mathcal{P}_{SS}$ & $\mathcal{P}_{RSS}$& $\mathcal{P}_{r}$   \\
			\hline
			$16\times 16$&Iter $\text{Iter}_{pcg}$ CPU Err Res   
    & 44\hspace{1.6cm} 12\hspace{1.8cm}0.20\hspace{1.6cm} 4.14e-06 \hspace{0.2cm}   7.13e-08  &16\hspace{1.6cm} 11\hspace{1.8cm}0.06\hspace{1.6cm} 7.14e-06 \hspace{0.2cm}   5.13e-08  & 15\hspace{1.6cm} 10\hspace{1.8cm}0.03\hspace{1.6cm} 3.04e-06 \hspace{0.2cm}   1.63e-08   &  13\hspace{1.6cm} 8\hspace{1.8cm}0.01\hspace{1.6cm} 4.73e-06 \hspace{0.2cm}   2.17e-08   
			\\
			\hline 
			$32\times 32$&Iter $\text{Iter}_{pcg}$ CPU Err Res         & 54\hspace{1.6cm} 12\hspace{1.8cm}1.58\hspace{1.6cm} 4.14e-06 \hspace{0.2cm}   7.13e-08 & 
		16\hspace{1.8cm}11\hspace{1.8cm} 0.09 \hspace{1.8cm}7.88e-05\hspace{0.8cm}3.91e-08  	&  15\hspace{1.8cm}10\hspace{1.8cm} 0.07 \hspace{1.8cm}6.33e-05\hspace{0.8cm}2.68e-08    &14\hspace{1.8cm}8\hspace{1.8cm} 0.03 \hspace{1.8cm}1.54e-04\hspace{0.8cm}4.62e-08  
			\\
			\hline
	$64\times 64$&Iter $\text{Iter}_{pcg}$ CPU Err Res      & 59\hspace{1.6cm} 12\hspace{1.8cm}9.18\hspace{1.6cm} 4.14e-06 \hspace{0.2cm}   7.13e-08 & 18\hspace{1.8cm}10\hspace{1.8cm} 0.70 \hspace{0.8cm}8.81e-04 \hspace{0.8cm} 9.98e-08 & 17\hspace{1.8cm}10\hspace{1.8cm} 0.38 \hspace{0.8cm}4.86e-04 \hspace{0.8cm} 9.94e-08  &14\hspace{1.8cm}8\hspace{1.8cm} 0.26 \hspace{0.8cm}2.98e-04 \hspace{0.8cm} 7.83e-08 
\\
\hline
    $132\times 132$&Iter $\text{Iter}_{pcg}$ CPU Err Res      & 71\hspace{1.6cm} 12\hspace{1.8cm}10.21\hspace{1.6cm} 4.14e-06 \hspace{0.2cm}   7.13e-08& 15\hspace{1.8cm}10\hspace{1.8cm} 2.34 \hspace{0.8cm}2.48e-03 \hspace{0.8cm} 5.12e-06& 15\hspace{1.8cm}10\hspace{1.8cm} 2.01 \hspace{0.8cm}1.38e-03 \hspace{0.8cm} 1.12e-06&14\hspace{1.8cm}8\hspace{1.8cm} 1.85 \hspace{0.8cm}9.22e-03 \hspace{0.8cm} 9.20e-08 
    \\
    \hline
       $160\times 160$&Iter $\text{Iter}_{pcg}$ CPU Err Res      &75\hspace{1.6cm} 12\hspace{1.8cm}11.12\hspace{1.6cm} 5.14e-06 \hspace{0.2cm}   8.13e-08 & 16\hspace{1.8cm}10\hspace{1.8cm} 3.95 \hspace{0.8cm}3.81e-03 \hspace{0.8cm} 6.26e-08& 15\hspace{1.8cm}10\hspace{1.8cm} 3.24 \hspace{0.8cm}1.71e-03 \hspace{0.8cm} 4.33e-08 &14\hspace{1.8cm}10\hspace{1.8cm} 3.14 \hspace{0.8cm}3.81e-03 \hspace{0.8cm} 8.69e-08 
       \\
     \hline
    
		\end{tabular}

\label{tabcc:v=0.1,FGMRES}
\end{table}

The CPU time comparisons reported in Figures~1--3 clearly highlight the differences in efficiency and robustness of the tested preconditioners within the PGMRES framework. For all viscosity values and grid sizes, the  $\mathcal{P}_{BD}$ preconditioner exhibits the poorest performance, with CPU times growing steeply and reaching more than 20--35 seconds for the finest meshes, thus confirming its lack of scalability. The  $\mathcal{P}_{SS}$  and  $\mathcal{P}_{RSS}$  preconditioners show moderate improvements, but their CPU times increase significantly as the viscosity decreases, reflecting their sensitivity to the ill-conditioning of the system. In contrast, the  $\mathcal{P}_{R}$  preconditioner consistently outperforms all the others, maintaining very low CPU times across all grid sizes and viscosities. In particular, for the challenging case $\nu = 0.01$,  $\mathcal{P}_{R}$  remains below 5 seconds even on the largest grid, whereas the other methods degrade substantially. These results demonstrate that  $\mathcal{P}_{R}$  is the most robust and efficient preconditioning strategy, providing superior scalability with respect to both the mesh size and the viscosity parameter.
\begin{table}[H]
\centering
\caption{Numerical results of the preconditioned FGMRES methods for $\nu=0.01$ $(tol=1e-7)$}
		\begin{tabular}{ |p{2.8cm}|p{0.9cm}||p{1.7cm}||p{1.5cm}|p{1.5cm}|p{1.5cm}| }
			\hline
			Grid parameter &  &$\mathcal{P}_{BD}$&$\mathcal{P}_{SS}$ & $\mathcal{P}_{RSS}$& $\mathcal{P}_{r}$    \\
			\hline
			$16\times 16$&Iter $\text{Iter}_{pcg}$ CPU Err Res   
    & 48\hspace{1.6cm} 17\hspace{1.8cm}0.28\hspace{1.6cm} 4.14e-06 \hspace{0.2cm}   7.13e-08  &  
	31\hspace{1.6cm}16\hspace{1.6cm} 0.07\hspace{1.6cm}  4.99e-04 \hspace{0.2cm}   9.71e-08		& 30\hspace{1.6cm}15\hspace{1.6cm} 0.04\hspace{1.6cm}  2.99e-04 \hspace{0.2cm}   8.74e-08 & 22\hspace{1.6cm}13\hspace{1.6cm} 0.02\hspace{1.6cm} 19e-05 \hspace{0.2cm}   1.45e-08    
			\\
			\hline 
			$32\times 32$&Iter $\text{Iter}_{pcg}$ CPU Err Res         & 57\hspace{1.6cm} 19\hspace{1.8cm}1.78\hspace{1.6cm} 4.14e-06 \hspace{0.2cm}   7.13e-08& 
			31\hspace{1.6cm}18\hspace{1.6cm} 0.21\hspace{1.6cm}  3.90e-03  \hspace{0.2cm}   6.14e-08&   30\hspace{1.6cm}17\hspace{1.6cm} 0.13\hspace{1.6cm}  2.66e-03  \hspace{0.2cm}   5.84e-08&22\hspace{1.8cm}13\hspace{1.8cm} 0.08 \hspace{1.8cm}1.32e-03\hspace{0.8cm}6.69e-08  
			\\
			\hline
	$64\times 64$&Iter $\text{Iter}_{pcg}$ CPU Err Res      & 61\hspace{1.6cm} 20\hspace{1.8cm}9.90\hspace{1.6cm} 4.14e-06 \hspace{0.2cm}   7.13e-08& 32\hspace{1.6cm}18\hspace{1.6cm} 1.20\hspace{1.6cm}  3.61e-02   \hspace{0.2cm}   9.21e-08 & 31\hspace{1.6cm}17\hspace{1.6cm} 0.72\hspace{1.6cm}  1.51e-02   \hspace{0.2cm}   8.28e-08 &22\hspace{1.8cm}13\hspace{1.8cm} 0.64 \hspace{0.8cm}8.76-04 \hspace{0.8cm} 8.82e-08 
\\
\hline
$132\times 132$&Iter $\text{Iter}_{pcg}$ CPU Err Res      &75\hspace{1.8cm}20\hspace{1.8cm} 11.20 \hspace{0.8cm}4.80e-02 \hspace{0.8cm}    5.39e-08 & 42\hspace{1.8cm}16\hspace{1.8cm} 6.12 \hspace{0.8cm}4.80e-02 \hspace{0.8cm}    5.39e-08& 41\hspace{1.8cm}15\hspace{1.8cm} 5.52 \hspace{0.8cm}2.81e-02 \hspace{0.8cm}    3.30e-08
&24\hspace{1.8cm}13\hspace{1.8cm} 4.39 \hspace{0.8cm}5.45e-03 \hspace{0.8cm}   3.20e-08  
    \\
    \hline
       $160\times 160$&Iter $\text{Iter}_{pcg}$ CPU Err Res      & 78\hspace{1.8cm}20\hspace{1.8cm} 11.87 \hspace{0.8cm}4.80e-02 \hspace{0.8cm}    5.39e-08& 33\hspace{1.8cm}17\hspace{1.8cm} 10.12 \hspace{0.8cm}8.60e-02 \hspace{0.8cm} 6.99e-08& 32\hspace{1.8cm}16\hspace{1.8cm} 9.10 \hspace{0.8cm}7.67e-02 \hspace{0.8cm} 4.99e-08 &24\hspace{1.8cm}16\hspace{1.8cm} 7.46 \hspace{0.8cm}9.55e-03 \hspace{0.8cm} 6.65e-08 
       \\
     \hline

		\end{tabular}

\label{tabcc:v=0.01,FGMRES}
\end{table}

From these experiments, it is evident that  our $\mathcal{P}_{R}$FGMRES method, in all trials, it necessitates a smaller amount of $\mathrm{CPU}$ time compared to other FGMRES methods. The comparative analysis of simulation results in Tables  \ref{tabcc:v=1,FGMRES}, \ref{tabcc:v=0.1,FGMRES} and \ref{tabcc:v=0.01,FGMRES} indicates a substantial improvement in the performance of FGMRES, considering CPU times, Err and Res. This improvement is particularly notable when utilizing the  $\mathcal{P}_{R}$ preconditioner compared to the  $\mathcal{P}_{BD}$, $\mathcal{P}_{SS}$ and  $\mathcal{P}_{RSS}$ preconditioners. Furthermore, another observation which can be posed here is  that the $\mathcal{P}_{R}$  preconditioner is  sensitive to the parameter $\nu$. The $\mathcal{P}_{R}$  preconditioned FGMRES methods exhibit
worse convergence behaviors as $\nu$ decreases, and achieve better performance when  $\nu$ increases. 
To demonstrate the effectiveness of the preconditioner $\mathcal{P}_{R}$, Figs. 1–4 present the eigenvalue distributions of the original matrix $\mathcal{A}$ and the preconditioned matrix $\mathcal{P}_{R}^{-1}\mathcal{A}$ on a $32 \times 32$ grid, for $\nu = 1$ and $\nu = 0.01$. As shown in Figs. $1$ and $3$, the eigenvalues of $\mathcal{A}$ are widely scattered, whereas Figs. 2 and 4 reveal that the preconditioned matrix exhibits significantly more clustered eigenvalues. This clustering is more pronounced for $\mathcal{P}_{R}^{-1}\mathcal{A}$ than for $\mathcal{A}$. Notably, at $\nu = 0.01$, both matrices show tighter clustering than at $\nu = 1$. The numerical results in the tables confirm that increasing $\nu$ leads to improved performance, while decreasing $\nu$ reduces the preconditioner effectiveness. Since the convergence of Krylov subspace methods depends on the eigenvalue distribution, these observations explain the reduced efficiency of the preconditioner at lower viscosities, particularly when $\nu = 0.01$.
Therefore, the preconditioner $\mathcal{P}_{R}$ is more effective in solving the non-symmetric double  saddle-point systems (\ref{saddle}).

We used the preconditioned MINRES method to solve the symmetric  double saddle-point problems  given in (\ref{saddle}). The preconditioners considered are defined as follows:
\[
\mathcal{P}_{RD} = 
\begin{pmatrix}
A & B^T  & 0\\
B & \hat{S} &0\\
0 &  0   &\alpha I
\end{pmatrix}, 
\qquad
\mathcal{P}_{BD} = 
\begin{pmatrix}
A & 0  & 0\\
0 & \hat{S}  &0 \\
0 & 0 & C\hat{S}^{-1}C^T
\end{pmatrix}.
\]
In all the numerical experiments below,  we choose
a zero initial guess and a stopping tolerance of $1e-07$ based on the reduction in relative residual norms for all Krylov subspace solvers tested unless otherwise indicated.
For the preconditioned  MINRES, the iterations are terminated when the following relative residual condition is satisfied:
\[
\frac{\left\| \mathcal{P}^{-1}b_{+} - P^{-1}\mathcal{A}_{+}w^{(k)} \right\|}{\left\| \mathcal{P}^{-1}b_{+} \right\|} < 10^{-07},
\]
where $\mathcal{P}$ is one of the preconditioners $\mathcal{P}_{RD}$ or  $\mathcal{P}_{BD}$. The symbol $\| \cdot \|$ denotes the Frobenius norm, and $w^{(k)} \in \mathbb{R}^{n+m+l}$ represents the current iterate.
When using Algorithm~2 to solve the first system in Algorithm~2, the preconditioner employed is an incomplete Cholesky factorization with drop tolerance, computed using MATLAB’s built-in function \texttt{ichol(.,opts)}, where the options are set as follows:
\begin{itemize}
    \item \texttt{opts.type = 'ict'},
    \item \texttt{opts.droptol = 1e-2}.
\end{itemize}
The stopping criterion for the inner iterations is based on the relative residual norm being smaller than a tolerance $\texttt{tol} = 10^{-6}$.
The numerical results for the symmetric double saddle-point problem,  are given in Tables below. As observed in Tables $7$, $8$ and $9$, the $\mathcal{P}_{RD}$ preconditioned global MINRES method with the proper parameter $\alpha$ has a better performance than the
$\mathcal{P}_{BD}$ preconditioned  MINRES method in terms of the iterations and CPU times. The three figures 11, 12 and 13, display the CPU times of the preconditioned MINRES method
for different values of the viscosity parameter $\nu \in \{1, 0.1, 0.01\}$, with a fixed tolerance of $10^{-7}$.
Two preconditioning strategies are compared:  $\mathcal{P}_{BD}$  and $\mathcal{P}_{RD}$ 
\begin{table}[H]
\centering
\caption{Numerical results of the preconditioned MINRES methods for $\nu=1$ $(tol=1e-07)$}
		\begin{tabular}{ |p{2.8cm}|p{0.9cm}||p{1.7cm}||p{1.5cm}|}
			\hline
			Grid parameter &  & $\mathcal{P}_{BD}$& $\mathcal{P}_{RD}$  \\
			\hline
	$64\times 64$&Iter $\text{Iter}_{pcg}$ CPU Err Res     & 54\hspace{1.8cm}10\hspace{1.6cm}  8.72 \hspace{0.8cm}7.30e-04 \hspace{0.8cm}  7.37e-08 &23\hspace{1.8cm}6\hspace{1.6cm}  1.67 \hspace{0.8cm}1.90e-04 \hspace{0.8cm}  7.51e-09  \\
\hline
    $132\times 132$&Iter $\text{Iter}_{pcg}$ CPU Err Res     & 58\hspace{1.8cm}10\hspace{1.6cm}  9.13 \hspace{0.8cm} 1.20e-04 \hspace{0.8cm}  4.54e-08
&10\hspace{1.8cm}6\hspace{1.6cm}  4.52 \hspace{0.8cm}1.05e-04 \hspace{0.8cm} 1.61e-09 
    \\
    \hline
       $160\times 160$&Iter $\text{Iter}_{pcg}$ CPU Err Res    & 61\hspace{1.8cm}10\hspace{1.6cm}  10.39 \hspace{0.8cm}1.92e-03 \hspace{0.8cm}    5.71e-08
 &13\hspace{1.8cm}6\hspace{1.6cm}  8.52 \hspace{0.8cm}1.36e-03 \hspace{0.8cm}  1.61e-08 
       \\
     \hline
    
		\end{tabular}

\label{tabcc:v=1,MINRES}
\end{table}

\begin{table}[H]
\centering
\caption{Numerical results of the preconditioned MINRES methods for $\nu=0.1$ $(tol=1e-7)$}
			\begin{tabular}{ |p{2.8cm}|p{0.9cm}||p{1.7cm}||p{1.5cm}|}
			\hline
			Grid parameter &  & $\mathcal{P}_{BD}$& $\mathcal{P}_{RD}$  \\
			\hline
	$64\times 64$&Iter $\text{Iter}_{pcg}$ CPU Err Res     & 59\hspace{1.8cm}12\hspace{1.6cm}  9.18 \hspace{0.8cm}4.30e-05 \hspace{0.8cm}  7.13e-08 &31\hspace{1.8cm}8\hspace{1.6cm}  2.26 \hspace{0.8cm}2.56e-04 \hspace{0.8cm}  1.44e-09  \\
\hline
    $132\times 132$&Iter $\text{Iter}_{pcg}$ CPU Err Res     & 71\hspace{1.8cm}12\hspace{1.6cm}  10.21 \hspace{0.8cm} 1.20e-04 \hspace{0.8cm}  4.54e-08
&21\hspace{1.8cm}6\hspace{1.6cm}  9.67 \hspace{0.8cm}3.12e-03 \hspace{0.8cm} 1.97e-09 
    \\
    \hline
       $160\times 160$&Iter $\text{Iter}_{pcg}$ CPU Err Res    & 75\hspace{1.8cm}12\hspace{1.6cm}  20.79 \hspace{0.8cm}1.92e-04 \hspace{0.8cm}    5.71e-08
 &18\hspace{1.8cm}6\hspace{1.6cm}  16.40 \hspace{0.8cm}4.30e-03 \hspace{0.8cm}  1.49e-09 
       \\
     \hline
    
		\end{tabular}
\label{tabcc:v=0.1,MINRES}
\end{table}
\begin{table}[H]
\centering
\caption{Numerical results of the preconditioned MINRES methods for $\nu=0.01$ $(tol=1e-7)$}
				\begin{tabular}{ |p{2.8cm}|p{0.9cm}||p{1.7cm}||p{1.5cm}|}
			\hline
			Grid parameter &  & $\mathcal{P}_{BD}$& $\mathcal{P}_{RD}$  \\
			\hline
	$64\times 64$&Iter $\text{Iter}_{pcg}$ CPU Err Res     & 61\hspace{1.8cm}20\hspace{1.6cm}  9.90 \hspace{0.8cm}4.59e-04 \hspace{0.8cm}  9.58e-10 &34\hspace{1.8cm}13\hspace{1.6cm}  3.15 \hspace{0.8cm}4.59e-04 \hspace{0.8cm}  9.58e-10 \\
\hline
    $132\times 132$&Iter $\text{Iter}_{pcg}$ CPU Err Res     & 75\hspace{1.8cm}20\hspace{1.6cm}  19.97 \hspace{0.8cm} 1.20e-04 \hspace{0.8cm}  4.54e-08
&26\hspace{1.8cm}14\hspace{1.6cm}  14.81 \hspace{0.8cm}1.92e-02 \hspace{0.8cm} 6.12e-10 
    \\
    \hline
       $160\times 160$&Iter $\text{Iter}_{pcg}$ CPU Err Res    & 78\hspace{1.8cm}20\hspace{1.6cm}  27.79 \hspace{0.8cm}1.92e-04 \hspace{0.8cm}    5.71e-08
 &21\hspace{1.8cm}13\hspace{1.6cm}  23.49 \hspace{0.8cm}1.60e-02 \hspace{0.8cm}  4.46e-10 
       \\
     \hline
    
		\end{tabular}

\label{tabcc:v=0.01,MINRES}
\end{table}

Tables $7$, $8$ and $9$. From the numerical results in this table, we observe that the $\mathcal{P}_{RD}$
preconditioner
is superior to the $\mathcal{P}_{BD}$ preconditioner in terms of iterations and CPU times. Noticeably, our proposed  $\mathcal{P}_{RD}$ preconditioner has shown numerical robustness for a wide range of the parameter $\nu$,
\section{Conclusion and future studies
}\label{conclusion}
We have carried out a detailed spectral analysis of a family of double saddle-point linear systems, preconditioned by the non-symmetric preconditioner $\mathcal{P}_{R}$ proposed in (\ref{Pr}), within the framework of multiple saddle-point problems. We derived accurate bounds for both the negative and positive intervals containing the eigenvalues of the preconditioned matrix. Numerical results on synthetic test problems confirm that these bounds are close to the endpoints of the spectral intervals.
We also evaluated the performance of this preconditioner for various values of the viscosity parameter $\nu$. A careful selection of the block approximation $\hat{S}$ yields a highly efficient preconditioner, as demonstrated numerically and compared against other existing preconditioners.
Figure \ref{Vis} illustrates the CPU times of the $\mathcal{P}_{RD}$ preconditioned MINRES method for different viscosities ($\nu=1, 0.1, 0.01$) as the grid size increases.

The results clearly demonstrate that the CPU times are sensitive to the viscosity parameter $\nu$.
For $\nu=1$, the  $\mathcal{P}_{RD}$ preconditioner remains highly efficient, with CPU times staying relatively moderate even for the largest grid.
As $\nu$ decreases to $0.1$, the CPU times grow significantly, indicating that the preconditioner experiences some degradation in performance. For $\nu=0.01$, the computational cost further increases, reflecting the stronger ill-conditioning of the system matrix.

\section*{Declarations
}
\section*{Funding}
This work did not receive any financial support.
\section*{Conflicts of Interest Statement}
The author declares no conflicts of interest.
\section*{Author contribution}
Material preparation, data collection, and analysis were performed by Achraf Badahmane. The first draft of the manuscript was written by Achraf Badahmane. The final version of the manuscript was read and approved by Achraf Badahmane.

\bibliographystyle{elsarticle-num-names}
\bibliographystyle{plain}

\end{document}